\title{On the $K$-theory of truncated polynomial algebras, revisited}
\author{Martin Speirs}
\newcommand{\HH}{\operatorname{HH}}
\newcommand{\THH}{\operatorname{THH}}
\newcommand{\TP}{\operatorname{TP}}
\newcommand{\TC}{\operatorname{TC}}
\newcommand{\HC}{\operatorname{HC}}
\newcommand{\can}{\operatorname{can}}
\newcommand{\lax}{\operatorname{lax}}
\newcommand{\Bcy}{\operatorname{B^{cy}}}
\newcommand{\sd}{\operatorname{sd}}
\newcommand{\map}{\operatorname{map}}
\newcommand{\Htilde}{\tilde{H}}
\newcommand{\WW}{\mathbb{W}}
\newcommand{\CycSp}{\operatorname{CycSp}}
\begin{document}
\maketitle

\section{Introduction}

\noindent The algebraic $K$-theory of truncated polynomial algebras over perfect fields of positive characteristic was first evaluated by Hesselholt and Madsen \cite{HesselholtMadsenCyclicPoly}. Their proof relied on a delicate analysis of the facet structure of regular cyclic polytopes.
In this paper, we show that the Nikolaus-Scholze approach to topological cyclic homology \cite{NikolausScholze} now makes it possible to give a purely homotopy-theoretic proof of this result. In fact, the only input we use is the calculation of the homology of the cyclic bar construction together with the action of Connes' operator thereon. 

\begin{theorem}[\cite{HesselholtMadsenCyclicPoly} Theorem A]\label{KofTruncatedPoly}
Let $k$ be a perfect field of positive characteristic. Then there is an isomorphism
\[
K_{2r-1}(k[x]/(x^e),(x)) \simeq \WW_{re}(k) / V_e \WW_r(k)
\]
and the groups in even degrees are zero.
\end{theorem}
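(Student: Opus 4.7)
The plan is to use the Nikolaus-Scholze formalism to reduce the computation to bookkeeping of the Frobenius action on the weight decomposition of a cyclic bar construction. Since $(x) \subset k[x]/(x^e)$ is nilpotent, the Dundas-Goodwillie-McCarthy theorem gives an equivalence of relative theories
$$K(k[x]/(x^e),(x)) \xrightarrow{\simeq} \TC(k[x]/(x^e),(x)),$$
reducing the problem to computing $\TC$ on the right. Writing $k[x]/(x^e) = k[\Pi_e]$ with $\Pi_e = \{0, 1, x, \ldots, x^{e-1}\}$, I would exploit the cyclotomic equivalence $\THH(k[x]/(x^e)) \simeq \THH(k) \wedge \Bcy(\Pi_e)$ and the weight splitting $\Bcy(\Pi_e) = \bigvee_{d \geq 0} \Bcy(\Pi_e)_d$. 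The summand $d = 0$ recovers $\THH(k)$, so the relative theory is governed by the sum over $d \geq 1$, and the splitting passes through $(-)^{hS^1}$, $(-)^{tC_p}$, and the $\TC$ formula.

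Next, using the stated input — the integral homology of each $\Bcy(\Pi_e)_d$ together with Connes' operator — I would determine the $S^1$-equivariant homotopy type of each wedge summand. Bökstedt's computation $\pi_* \THH(k) = k[\sigma]$ with $|\sigma|=2$, together with the concentration of the homology of $\Bcy(\Pi_e)_d$ in a small number of degrees, should pin down each piece $\THH(k) \wedge \Bcy(\Pi_e)_d$ as an $S^1$-spectrum whose equivariance is recorded by Connes' operator. I would then plug this into the Nikolaus-Scholze fiber sequence
$$\TC(A) \longrightarrow \TC^-(A) \xrightarrow{\varphi - \can} \TP(A).$$
The canonical map $\can$ preserves weight while the cyclotomic Frobenius $\varphi_p$ sends weight $d$ into weight $pd$, so the fiber can be computed by organizing weights into $p$-adic orbits: for each integer $m$ coprime to $p$, the Frobenius $\varphi$ links weights $\{m, pm, p^2 m, \ldots\}$, and their joint contribution to $\TC$ arises from this linked system.

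The main obstacle is the identification of a single per-orbit contribution with $\WW_{re}(k)/V_e \WW_r(k)$ in degree $2r-1$. The Witt-vector structure should arise from the classical identification of the $p$-typical restriction tower of $\THH(k)$ with truncated Witt vectors $\WW_n(k)$, while the Verschiebung $V_e$ should encode the relation $x^e = 0$: it is precisely this relation that shifts the Witt vector length from $r$ to $re$ and produces the $V_e$ in the denominator. Verifying this identification — together with the vanishing of the even-degree groups, which should follow from the parity of the homology of $\Bcy(\Pi_e)_d$ combined with $|\sigma| = 2$ — is where the homotopy-theoretic approach replaces the analysis of facets of regular cyclic polytopes in \cite{HesselholtMadsenCyclicPoly} by the universal properties of the Tate construction.
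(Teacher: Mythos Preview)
Your outline is correct and follows the paper's approach essentially verbatim: McCarthy's theorem, the pointed-monoid splitting $\THH(k[\Pi_e])\simeq \THH(k)\otimes \Bcy(\Pi_e)$, the weight decomposition, the Nikolaus--Scholze equalizer, and the organization of weights into $p$-adic orbits indexed by $m'$ coprime to $p$. The one place your sketch diverges is the mechanism for the step you flag as the ``main obstacle'': the paper does not invoke the $\TR$ restriction tower, but instead computes $(\THH(k)\otimes B(m))^{h\TT}$ and $(\THH(k)\otimes B(m))^{t\TT}$ directly via the Tate and homotopy-fixed-point spectral sequences, running an induction on $v_p(m)$ powered by the co-connectivity of the Frobenius $\varphi$; the $p$-typical Witt groups $W_v(k)$ then appear as abutments after a cyclic-module extension argument, and the reassembly into $\WW_{re}(k)/V_e\WW_r(k)$ is an elementary decomposition of big Witt vectors into $p$-typical pieces.
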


We briefly summarize the method. Let $k$ be a perfect field of characteristic $p > 0$ and let $A = k[x]/(x^e)$ and $I = (x)$ the ideal generated by the variable.  
The $k$-algebra $A$ is the pointed monoid algebra for the pointed monoid $\Pi_e = \{0, 1, x, \dots , x^{e-1}\}$ determined by $x^e = 0$. There is a canonical equivalence of cyclotomic spectra
\[
\THH(A) \simeq \THH(k) \otimes \Bcy(\Pi_e)
\]
where the Frobenius morphism on the right is the tensor product of the usual Frobenius and the unstable Frobenius on the cyclic bar construction of $\Pi_e$, see \cref{THHsection} for details.
Using the theory of cyclic sets one obtains a $\TT$-equivariant splitting of the cyclic bar construction,
\[
\Bcy(\Pi_e) \simeq \bigvee_{m \geq 0} B(m) 
\]
into simpler $\TT$-spaces $B(m)$. 
The singular homology and Connes' operator of these $\TT$-spaces is easily determined and reduces to computations of the Hochschild homology of $A$ first carried out in \cite{BACH91} and \cite{LarsenLindenstrauss}. The answer is simple enough that the Atiyah-Hirzebruch spectral sequence degenerates allowing us to directly determine the homotopy groups of $\THH(k) \otimes B(m)$. 
From \cite{NikolausScholze}  the topological cyclic homology of $A$ is given by the equalizer
\[
\TC(A;p) \to \TC^{-}(A) \overset{\phi - \can}\longto \TP(A)
\]
so using the above splitting this reduces to computing $(\THH(k) \otimes B(m))^{h\TT}$ and $(\THH(k) \otimes B(m))^{t\TT}$. We achieve this by an inductive procedure, making use of the highly co-connective Frobenius map 
\[
\phi : (\THH(k) \otimes B(m))^{h\TT} \to (\THH(k) \otimes B(pm))^{t\TT}
\]
and the periodicity of $(\THH(k) \otimes B(m))^{t\TT}$. Assembling the answers for varying $m$ then yields the $\TC$-calculation. 
Applying McCarthy's theorem one obtains the result.

We remark that the method employed here was also used by Hesselholt and Nikolaus  \cite{HesselholtNikolausCusp} to evaluate the $K$-theory of cuspidal curves over $k$, thereby affirming the conjectural calculation in \cite{HesselholtCusps}. We consider this method a first step towards making topological cyclic homology as easy to compute as Connes' cyclic homology $\HC$. 

\subsection{Acknowledgements} 
I would like to thank Lars Hesselholt for his generous and valuable guidance while working on this project. I would also like to thank Christian Ausoni, Ryo Horiuchi, Malte Leip and Joel Stapleton for several useful conversations and comments during the production of this paper. I thank the anonymous referee for several helpful comments on content and exposition.
This paper is based upon work supported by the DNRF Niels Bohr Professorship of Lars Hesselholt
as well as the National Science Foundation under Grant No.\ DMS-1440140 while the author was in residence at the Mathematical Sciences Research Institute in Berkeley, California during the Spring 2019 semester.

\section{Witt vectors, big and small}
\noindent
The purpose of this short section is to show the following well-known splitting. Let $s = s(p,r,d)$ be the unique positive integer such that 
\[
p^{s-1}d \leq r < p^sd
\]
if it exists, or else $s=0$. 

\begin{lemma}\label{WittSplit}
Let $k$ be a perfect field of characteristic $p>0$. Let $e = p^ue'$ with $(p,e') = 1$. There is an isomorphism
\[
\WW_{re}(k)/V_e\WW_r(k) \simeq \prod W_h(k)
\]
where the product is indexed over $1 \leq m' \leq re$ with $(p,m')=1$ and with $h = h(p,r,e,m')$ given by
\[
h =  \left\{
	\begin{array}{ll}
		  s & \mbox{if } e' \nmid m' \\
		  \min\{u,s\} & \mbox{if } e' \mid m'
	\end{array}
\right.
\]
where $s = s(p,re,m')$ is the function defined above.
\end{lemma}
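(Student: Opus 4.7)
My plan is to exploit the standard $p$-typical decomposition of big Witt vectors over a $\mathbb{Z}_{(p)}$-algebra and then track the Verschiebung $V_e$ factor by factor. For any $n \geq 1$ there is a natural ring isomorphism
\[
\WW_n(k) \simeq \prod_{\substack{1 \leq j \leq n \\ (j,p)=1}} W_{s(p,n,j)}(k),
\]
where the $j$-factor collects the ghost coordinates at the indices $j, jp, \ldots, jp^{s(p,n,j)-1}$ lying in $\{1, \ldots, n\}$. I will take this splitting as well-known, with the convention $W_0(k) = 0$ so that factors with $s(p,n,j) = 0$ disappear.

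Next I would factor $V_e = V_{e'} \circ V_{p^u}$ as the composite
\[
\WW_r(k) \xrightarrow{\,V_{p^u}\,} \WW_{rp^u}(k) \xrightarrow{\,V_{e'}\,} \WW_{re}(k)
\]
and describe each piece under the splitting. The claim is that $V_{p^u}$ preserves the index $j$ and restricts on the $j$-factor to the $p$-typical Verschiebung $V^u : W_{s(p,r,j)}(k) \to W_{s(p,r,j)+u}(k)$, whose cokernel is canonically $W_u(k)$; and that $V_{e'}$, with $(e',p)=1$, sends the $j$-factor isomorphically onto the $(je')$-factor, because one checks $s(p,rp^u,j) = s(p,re,je')$ directly from the defining inequality. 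Both facts are standard manipulations of the Verschiebung/ghost formulas and follow from its description on multiplicative coordinates $1 + tR[[t]]$.

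With this in hand, I compute the cokernel of $V_e$ in the $m'$-factor of $\WW_{re}(k)$ case by case, for $(m',p)=1$ and $1 \leq m' \leq re$. If $e' \nmid m'$ then the $m'$-factor is not in the image of $V_{e'}$, so nothing is killed and the cokernel is the whole $W_{s(p,re,m')}(k) = W_s(k)$. If $e' \mid m'$ write $m' = je'$ with $(j,p)=1$: when $j \leq r$ the map $V_e$ restricts on this factor to $V^u$ and the cokernel is $W_u(k)$, while when $j > r$ the factor is again missed and the cokernel is the full $W_s(k)$.

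The last task is to match the two sub-cases with the formula $h = \min(u,s)$. Unwinding $s(p,re,je')$ from $p^{s-1}je' \leq re < p^s je'$ gives $s = s(p,r,j)+u$ when $j \leq r$ (hence $u \leq s$ and $\min(u,s) = u$) and $s \leq u$ when $j > r$ (hence $\min(u,s) = s$). The main obstacle I expect is purely bookkeeping: juggling the three length functions $s(p,r,j)$, $s(p,rp^u,j)$ and $s(p,re,je')$, correctly handling the boundary $s(p,r,j) = 0$ where the $j$-factor of the domain is absent, and checking that the reindexing $j \mapsto je'$ is a bijection between $\{j : (j,p)=1,\,1 \leq j \leq re\}\cap\{j : j \leq r \text{ or } j > r\}$ and the $e'$-divisible coprime $m'$'s of $\{1,\ldots,re\}$. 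Assembling these contributions over all $m'$ yields the claimed product decomposition.
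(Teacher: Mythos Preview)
Your proposal is correct and follows essentially the same route as the paper: use the $p$-typical decomposition of big Witt vectors, track $V_e$ through it via ghost coordinates, and then run the case analysis on whether $e' \mid m'$ and whether the preimage index $j$ satisfies $j \leq r$. The only cosmetic difference is that the paper packages the Verschiebung computation into a single commuting square $I_{m'} \circ V_e = e'\, V_{p^u} \circ I_d$ (for $m' = e'd$, $d \leq r$) rather than factoring $V_e = V_{e'} \circ V_{p^u}$ as you do; your ``$V_{e'}$ is an isomorphism onto the $je'$-factor'' is exactly the paper's observation that the unit $e'$ appears and can be ignored.
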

\begin{proof}
We use the isomorphism
\[
\WW_r(k) \overset{\langle I_d \rangle}\longto \prod W_s(k)
\]
which is a map of $\ZZ_{(p)}$-algebras, where the product runs over $d$ such that $(p,d)=1$ and $1 \leq d \leq r$ and where $s = s(p,r,d)$, see for example \cite[Prop. 1.10 and Example 1.11]{HesselholtBigDRW}. The $d$'th component of this map is the composite
\[
I_d : \WW_r(k) \overset{F_d}\to \WW_{\lfloor r/d \rfloor}(k) \overset{R}\to W_s(k)
\]
where $F_d$ is the Frobenius map and $R$ is the restriction map induced by the inclusion $\{1,p,p^2, \dots, p^{s-1}\} \subs \{1,2, \dots, \lfloor r/d \rfloor \} $. If $m' = e'd$ with $d \leq r$ then one readily checks that $s(p,re,m') = s(p,r,d) + u$. Furthermore in this case the following diagram commutes
\begin{equation*}
\begin{tikzcd}[row sep=3em, column sep=3em]
\WW_r(k) \arrow[r, "I_d"] \arrow[d, "V_e"] & W_s(k) \arrow[d, "e'V_{p^u}"] \\
\WW_{re}(k) \arrow[r, "I_{m'}"] & W_{s+u}(k)
\end{tikzcd}
\end{equation*}
Indeed this may be checked after applying the ghost map, where it is a routine verification analogous to \cite[Lemma 1.5]{HesselholtBigDRW}. This corresponds to the case $u \leq s$.
Since $(p,e')=1$ we have
\[
W_{s+u}(k)/(e'V_{p^u}W_s(k)) \cong W_u(k) .
\]
Thus, we get an isomorphism
\[
\WW_{re}(k)/V_e\WW_r(k) \overset{\simeq}\to \prod W_u(k) \times \prod W_s(k) \times \prod W_s(k) \overset{\simeq}\to \prod W_h(k)
\]
where in the middle term, the first product is indexed over $1 \leq d \leq r$ with $(p,d)=1$, the second product is indexed over $1 \leq m' \leq re$ with $e' \mid m'$ and with $u > s$, the third product is indexed over $1 \leq m' \leq re$ with $e' \nmid m'$  and with $(p,m')=1$. In the last term, the product is indexed over $1 \leq m' \leq re$ with $(p,m')=1$.
\end{proof}


\section{Hochschild homology of truncated polynomial algebras}
\noindent
In this section we review the results of \cite{BACH91} and \cite{LarsenLindenstrauss} on cyclic homology of algebras of the form $A = k[x]/f(x)$.
We work over a general commutative unital base ring $k$. The Hochschild homology of $A$ over $k$ is the homology of the associated chain complex for the cyclic $k$-module
\[
\Bcy(A/k)[n] = A^{\otimes n+1}
\]
where the tensor product is over $k$. The cyclic structure maps are given as follows
\begin{eqnarray*}
d_i(a_0 \otimes \dots \otimes a_n) &=& \left\{
        \begin{array}{ll}
         a_0 \otimes \dots \otimes a_i a_{i+1} \otimes \dots \otimes a_{n}    & \quad 0 \leq i < n \\
          a_{n}a_0 \otimes a_1 \otimes \dots  \otimes a_{n-1} & \quad  i = n
        \end{array}
    \right.
\\
s_i(a_0 \otimes \dots \otimes a_n) &=&  a_0 \otimes \dots \otimes a_i \otimes 1 \otimes a_{i+1} \otimes \dots \otimes a_n
\\
t_n(a_0 \otimes \dots \otimes a_n) &=& a_n \otimes a_0 \otimes \dots \otimes a_{n-1} .
\end{eqnarray*}

The Hochschild homology then is the homology $\HH_*(A/k)$ of the associated chain complex with differential given by the alternating sum of the face maps.

\begin{proposition}\label{HHcomputation}
Let $A = k[x]/(x^e)$ where $k$ is a commutative unital ring. There is an isomorphism
\[
\HH_*(A/k) =  \left\{
	\begin{array}{lll}
		  A & \mbox{if } * = 0 \\
		  k[e]\{1\} \oplus k\{x, \dots , x^{e-1} \} & \mbox{if } * > 0 \text{ even} \\
		  k\{1,x, \dots , x^{e-2}\} \oplus k/ek\{x^{e-1}\} & \mbox{if } * > 0 \text{ odd}
	\end{array}
\right.
\]
where $k[e]$ denotes the $e$-torsion elements of $k$.
\end{proposition}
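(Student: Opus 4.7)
The plan is to identify $\HH_*(A/k)$ with $\operatorname{Tor}_*^{A \otimes_k A}(A, A)$ and evaluate the latter using the classical periodic bimodule resolution of $A$ introduced in \cite{BACH91}. Set $A^e = A \otimes_k A$, let $\mu : A^e \to A$ denote multiplication, and define elements
\[
u = x \otimes 1 - 1 \otimes x, \qquad v = \sum_{i=0}^{e-1} x^i \otimes x^{e-1-i}
\]
of $A^e$. A short telescoping computation gives $uv = vu = x^e \otimes 1 - 1 \otimes x^e$, which vanishes in $A^e$ because $x^e = 0$ in $A$. Hence multiplication by $u$ and $v$ assembles into a complex of free $A^e$-modules
\[
\cdots \longrightarrow A^e \xrightarrow{\,v\,} A^e \xrightarrow{\,u\,} A^e \xrightarrow{\,v\,} A^e \xrightarrow{\,u\,} A^e \xrightarrow{\,\mu\,} A \longrightarrow 0.
\]

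The first main step is to verify that this complex is exact, so that it is a free $A^e$-resolution of $A$. Identifying $A^e \cong k[x, y]/(x^e, y^e)$, with $u$ corresponding to $x - y$ and $v$ to $\sum_{i=0}^{e-1} x^i y^{e-1-i}$, exactness reduces to the identities $\ker(\cdot u) = v \cdot A^e$ and $\ker(\cdot v) = u \cdot A^e$ on $A^e$. I would verify these directly on the $k$-basis $\{x^i y^j\}_{0 \le i, j < e}$ by comparing coefficients; this is the main obstacle, though it is entirely routine and is precisely the Koszul-type computation carried out in \cite{BACH91}.

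Next, I apply $-\otimes_{A^e} A$ to the resolution. The element $u$ maps to $x - x = 0$ in $A$, while $v$ maps to $\sum_{i=0}^{e-1} x^i \cdot x^{e-1-i} = e x^{e-1}$. Hence $\HH_*(A/k)$ is the homology of
\[
\cdots \xrightarrow{\cdot\, e x^{e-1}} A \xrightarrow{\,\,0\,\,} A \xrightarrow{\cdot\, e x^{e-1}} A \xrightarrow{\,\,0\,\,} A \longrightarrow 0.
\]
In degree $0$ this gives $A$. In positive odd degree one reads off the cokernel $A / e x^{e-1} A$; since $x \cdot x^{e-1} = 0$ in $A$, the submodule $e x^{e-1} A$ equals $ek \cdot x^{e-1}$, so this cokernel splits as $k\{1, x, \dots, x^{e-2}\} \oplus k/ek\{x^{e-1}\}$. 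In positive even degree one reads off the annihilator of $e x^{e-1}$ in $A$: writing $a = \sum a_i x^i$, the condition $e x^{e-1} a = 0$ becomes $e a_0 x^{e-1} = 0$, which is to say $a_0 \in k[e]$, and all higher coefficients are free. This gives $k[e]\{1\} \oplus k\{x, \dots, x^{e-1}\}$, matching the claim.
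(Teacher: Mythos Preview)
Your proposal is correct and follows essentially the same route as the paper: both identify $\HH_*(A/k)$ with $\Tor^{A^e}_*(A,A)$ (using that $A$ is free, hence flat, over $k$), invoke the periodic $A^e$-resolution of $A$ from \cite{BACH91}, and read off the homology of the resulting two-periodic complex with differentials $0$ and $f'(x) = ex^{e-1}$. If anything, you supply more detail than the paper's sketch, which simply cites the existence of the resolution and writes down the tensored complex.
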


The proof uses a common technique for such rings, namely the construction of a small and computable complex. The task is then to show that this complex is quasi-isomorphic to the Hochschild complex. For a $k$-algebra $A$ of the form $A = k[x]/(f(x))$, assuming it is flat as an $k$-module then the Hochschild homology may be calculated as $\Tor^{A^e}_*(A,A)$ where $A^e = A \otimes A^{op}$. So it suffices to find a small $A{-}A$-bimodule resolution of $A$. Given such a resolution $R(A)_* \to A$ one now tensors over $A^e$ with $A$ to get a complex, $\overline{R}(A)_*$ computing $\HH_*(A/k)$.  For an appropriate choice of resolution the corresponding complex $\overline{R}(A)_*$ has the following form
\[
0 \leftarrow A \overset{0}\leftarrow  A \overset{f'(x)}\leftarrow A  \overset{0}\leftarrow A  \overset{f'(x)}\leftarrow A \leftarrow \cdots
\]
from which the result readily follows. 

We now introduce a splitting of the Hochschild homology of the $k$-algebra $A = k[x]/(x^e)$. Equip $A$ with a ``weight'' grading by declaring $x^m$ have weight $m$. This induces a grading on the tensor powers of $A$ and we let
\[
\Bcy(A/k;m)[n] \subs \Bcy(A/k)[n] = A^{\otimes n+1}
\]
be the sub $k$-module of weight $m$. It is generated by those tensor monomials whose weight is equal to $m$. This forms a sub cyclic $k$-module of  $\Bcy(A/k)[-]$ and so we obtain a splitting
\[
\Bcy(A/k)[-] \simeq \bigoplus_{m \geq 0} \Bcy(A/k;m)[-]
\]
of cyclic $k$-modules, and of the associated chain complexes. Taking homology then gives a splitting as well,
\[
\HH_*(A/k) \simeq \bigoplus_{m \geq 0} \HH_*(A/k;m).
\]
In the following lemma, let $d = d(e,m) = \lfloor \frac{m-1}{e} \rfloor$ be the largest integer less that $(m-1)/e$ . 

\begin{lemma}\label{HHweightComputation}
Let $k$ and $A$ be as in \cref{HHcomputation}. If $m$ is not a multiple of $e$ then $\HH_*(A/k;m)$ is concentrated in degrees $2d$ and $2d+1$ where it is free of rank $1$ as a $k$-module. In this case Connes' $B$-operator takes the generator in degree $2d$ to $m$ times the generator in degree $2d+1$, up to a sign. If $m$ is a multiple of $e$ then $\HH_*(A/k;m)$ is concentrated in degree $2d+1$ and $2d+2$. The group in degree $2d+1$ is isomorphic to $k/ek$ while the group in degree $2d+2$ is isomorphic to $k[e]$. In this case Connes' operator acts trivially. 

\end{lemma}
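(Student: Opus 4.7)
The plan is to refine the small-complex computation behind \cref{HHcomputation} to track the weight grading throughout. I write the copy of $A$ in degree $2n$ of $\overline{R}(A)_*$ as $A\cdot t^n$ and the copy in degree $2n+1$ as $A\cdot t^n\,dx$, and I declare that $x$ and $dx$ have weight $1$ while the periodicity class $t$ has weight $e$. The only nontrivial differential is multiplication by $f'(x)=ex^{e-1}$, which sends $x^j t^n$ to $e\,x^{j+e-1}\,t^{n-1}\,dx$; both sides have weight $j+ne$, so the differential preserves the weight grading. Consequently the weight splitting of $\Bcy(A/k)$ is compatible with the quasi-isomorphism to $\overline{R}(A)_*$, and $\HH_*(A/k;m)$ can be read off weight by weight.

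A direct count shows that $x^j t^n$ contributes to weight $m$ only for $n=\lfloor m/e\rfloor$, and $x^j t^n\,dx$ only for $n=\lfloor (m-1)/e\rfloor$. When $m$ is not a multiple of $e$ these two indices agree and equal $d$, so the weight-$m$ part of the small complex consists of two rank-one free $k$-modules in degrees $2d$ and $2d+1$, with all adjacent entries vanishing for weight reasons; taking homology gives $\HH_{2d}(A/k;m)\cong\HH_{2d+1}(A/k;m)\cong k$. When $m=ne$ is a multiple of $e$, the indices differ by one and the weight-$m$ complex is $k\xrightarrow{e}k$ concentrated in degrees $2d+2$ and $2d+1$ (with $d=n-1$), whose homology is $k/ek$ in degree $2d+1$ and $k[e]$ in degree $2d+2$, exactly as stated.

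For Connes' $B$-operator the plan is to transfer its action from $\Bcy(A/k)$ to $\overline{R}(A)_*$ via the chain-level mixed-complex quasi-isomorphism constructed in \cite{BACH91} (see also \cite{LarsenLindenstrauss}); the resulting formulas are
\[
B(x^j t^n) = (j+ne)\,x^{j-1}\,t^n\,dx \text{ for } j \ge 1, \quad B(t^n) = 0, \quad B(x^j t^n\,dx) = 0.
\]
Writing $m = j + ne$: if $m$ is not divisible by $e$ then the degree-$2d$ generator has $j \ge 1$ and is sent to $m$ times the degree-$(2d+1)$ generator (the sign depending on conventions in the comparison map), matching the claim. If instead $m = ne$ then the even-degree generator is $t^n$ with $j=0$, so it is killed by the middle formula, and the odd-degree generator $x^{e-1}t^{n-1}dx$ is killed by the third formula, so $B$ acts trivially. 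The step I expect to be the main obstacle is precisely this lift of $B$ to the small complex: transferring Connes' operator across a quasi-isomorphism is not formal, and the cleanest route is to import the explicit comparison map and contracting homotopy from \cite{BACH91,LarsenLindenstrauss} rather than re-derive them from scratch.
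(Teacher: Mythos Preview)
Your approach is essentially the paper's: both compute by pushing the weight grading through the small complex $\overline{R}(A)_*$ of \cite{BACH91}, and both defer the Connes' operator to the explicit comparison maps in \cite[Proposition~2.1]{BACH91}. The paper phrases the weight bookkeeping slightly differently---rather than assigning weights directly to the small complex, it records that the comparison map $\psi: R(A/k)_* \to B(A/k)_*$ increases weight by $je$ in degree $2j$ and by $je+1$ in degree $2j+1$ (since $\Delta$ raises weight by $e-1$ and $\delta$ by $1$). That is precisely the statement which justifies your convention that $t$ has weight $e$ and $dx$ has weight $1$.

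One step in your write-up is a non sequitur as it stands: from the fact that the differential on $\overline{R}(A)_*$ preserves your weight grading you conclude that ``the weight splitting of $\Bcy(A/k)$ is compatible with the quasi-isomorphism to $\overline{R}(A)_*$.'' The former only gives an internal splitting of the small complex; to identify its weight-$m$ summand with $\HH_*(A/k;m)$ you still have to check that $\overline{\psi}$ carries weight $m$ to weight $m$. This is exactly the weight-shift computation the paper supplies (an easy induction on the degree using the shifts of $\Delta$ and $\delta$), and once you insert it your argument is complete and matches the paper's.
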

\begin{proof}
First we prove that the groups are as stated. We follow the proof given in \cite[Section 7.3.]{HesselholtMadsen97}. Consider the resolution of $A$ as an $A\otimes A$-module constructed by \cite{BACH91}, denoted $R(A/k)_*$ having the form
\[
\cdots \overset{\Delta}\longto A \otimes A \overset{\delta}\longto A\otimes A  \overset{\Delta}\longto A \otimes A  \overset{\delta}\longto A \otimes A  \overset{\mu}\longto A \to 0
\]
where 
\[
\Delta = \frac{x^e\otimes 1 - 1 \otimes x^e}{x\otimes 1 - 1 \otimes x} 	\quad \text{and} \quad \delta = 1 \otimes x - x \otimes 1
\]
In \cite{BACH91} a quasi-isomorphism $\psi : R(A/k)_* \longto B(A/k)_*$ with the bar-resolution is constructed. Since $\Delta$ increases the weight by $e-1$ and $\delta$ by $1$, and since the differential $b'$ of the bar resolution preserves weight, we see (by induction on $j$) that $\psi_{2j}$ increases weight by $je$, whereas $\psi_{2j+1}$ increases weight by $je+1$. Tensoring over $A^e$ with $A$ gives a quasi-isomorphism $\overline{\psi} : \overline{R}(A/k)_* \longto \Bcy(A/k)_*$ which has the same weight shift. 
The result now follows from \cref{HHcomputation}

For the statements about Connes' operator, this follows by an explicit choice of a quasi-isomorphism $\psi$ (and its inverse). This is done in \cite[Section 1]{BACH91} and in \cite[Proposition 2.1.]{BACH91} the computation of Connes' operator is given.
\end{proof}


\section{Topological Hochschild homology and the cyclic bar construction} \label{THHsection}
\noindent
Let $\Pi_e = \{0,1,x, \dots, x^{e-1}\}$ be the pointed monoid determined by setting $x^e = 0$. Then the truncated polynomial algebra $A$ is the pointed monoid ring $k(\Pi_e) = k[\Pi_e]/k[0]$. The cyclic bar construction of $\Pi_e$ is the cyclic set $\Bcy(\Pi_e)[-]$ with
\[
\Bcy(\Pi_e)[k] = \Pi_e^{\wedge (k+1)}
\]
and with the usual Hochschild-type structure maps. We write $\Bcy(\Pi_e)$ for the geometric realization of $\Bcy(\Pi_e)[-]$. The space $\Bcy(\Pi_e)$ admits a natural $\TT$-action where $\TT$ is the circle group, as does the geometric realization of any cyclic set. Furthermore it is an unstable cyclotomic space, i.e.\ there is a map
\[
\psi_p : \Bcy(\Pi) \to \Bcy(\Pi)^{C_p}
\]
which is equivariant when the domain is given the natural $\TT/C_p$-action. For a construction of this map see \cite[Section 2]{BHM} or, for a review in our setup, see \cite[Section 3.1.]{SpeirsCoordinateAxes}. See also \cite[Section IV.3]{NikolausScholze} for similar constructions in the context of $\mathbb{E}_1$-monoids in spaces.

To every non-zero $n$-simplex $\pi_0 \wedge \dots \wedge \pi_n \in \Bcy(\Pi_e)[n]$ we associate its \emph{weight} as follows, each $\pi_i$ is equal to $x^{m_i}$ for some $0 \leq m_i \leq e-1$. Let 
\[
w(\pi_0 \wedge \dots \wedge \pi_n) = \sum_{i=0}^n m_i .
\]
The weight is preserved by the cyclic structure maps and so we obtain a splitting of pointed cyclic sets 
\[
\Bcy(\Pi_e)[-] = \bigvee_{m \geq 0} \Bcy(\Pi_e;m)[-]
\]
where $ \Bcy(\Pi_e;m)[-] \subs \Bcy(\Pi_e)[-]$ consists of all simplices with weight $m$. Let $B(m)$ denote the geometric realization of $\Bcy(\Pi_e;m)[-]$. So we have a splitting of pointed $\TT$-spaces
\begin{equation}\label{CyclicBarDecomposition}
\Bcy(\Pi_e) \simeq \bigvee_{m \geq 0} B(m) .
\end{equation}
By \cite[Splitting lemma]{SpeirsCoordinateAxes} we have $\THH(k(\Pi_e)) \simeq \THH(k) \otimes \Bcy(\Pi_e)$ as cyclotomic spectra. Here the Frobenius on the right hand side is the tensor product of the usual Frobenius on $\THH(k)$ (as constructed in \cite[Section III.2]{NikolausScholze}) and the Frobenius on $\Sigma^\infty\Bcy(\Pi_e)$ arising from the unstable Frobenius (see \cite[Section on cyclic bar construction]{SpeirsCoordinateAxes}). We are interested in the \emph{relative} $\THH$, defined for any ring $A$ and ideal $I$ as the homotopy fiber $\THH(A,I)$ of the map
\[
\THH(A) \to \THH(A/I)
\]
induced by the quotient map. 
In the case at hand, the relative $\THH$ corresponds to simply cutting out the weight zero part, i.e.\ we have an equivalence of spectra with $\TT$-action
\[
\THH(A,I) \simeq \bigoplus_{m \geq 1} \THH(k) \otimes B(m) 
\]
where $I = (x)$ is the ideal generated by the variable. To see this note that the composite of the canonical map 
\[
\THH(k) \otimes B(m) \to \THH(A)
\]
with the map $\THH(A) \to \THH(k)$ is constant for $m \geq 1$ and is the identity map for $m=0$ where $B(0) = S^0$.

Given any pointed monoid $\Pi$ there is an isomorphism of cyclic $k$-modules
\[
w : k(\Bcy(\Pi)[-]) \longto \Bcy(k(\Pi)/k)[-]
\]
which maps $\pi_0 \wedge \dots \wedge \pi_n$ to $ \pi_0 \otimes \dots \otimes \pi_n$. Note that $k(\Bcy(\Pi)[-])$ is the simplicial complex for the space $\Bcy(\Pi)$. In particular the associated homology $H_*(k(\Bcy(\Pi)[-]))$ computes the simplicial homology of $\Bcy(\Pi)$.

In the following lemma, we let $d = d(e,m) = \lfloor \frac{m-1}{e} \rfloor$ for any $m \geq 1$.

\begin{lemma}(\rm{\cite[Lemma 7.3]{HesselholtMadsen97}})\label{HomologyLemma}
Let $A = k[x]/(x^e)$ where $k$ is a commutative unital ring and let $B(m) \subs \Bcy(\Pi_e)$ be as described above.
\begin{enumerate}
\item If $e \nmid m$ then $\Htilde_*(B(m);k)$ is free of rank $1$ if $* = 2d, 2d+1$ and trivial, otherwise. The Connes' operator takes a generator in degree $2d$ to $m$ times a generator in degree $2d+1$.
\item If $e \mid m$ then $\Htilde_*(B(m);k)$ is isomorphic to $k/ek$  if $* = 2d+1$, to $k[e]$ if $*=2d+2$, and trivial otherwise.
\end{enumerate}
\end{lemma}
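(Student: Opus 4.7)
The plan is to transfer \cref{HHweightComputation} from the Hochschild side to the simplicial-homology side via the cyclic $k$-module isomorphism $w$ introduced just above the lemma. Once $w$ is checked to respect the weight decomposition, the two statements become identical under the identification $k(\Pi_e) \cong A$.

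First I would verify that $w$ preserves weight: a smash monomial $x^{m_0} \wedge \dots \wedge x^{m_n}$ of weight $m = m_0 + \dots + m_n$ is sent to $x^{m_0} \otimes \dots \otimes x^{m_n}$, which lies in the weight-$m$ summand of $A^{\otimes n+1}$. So $w$ restricts on each weight summand to a cyclic-$k$-module isomorphism
\[
w_m : k(\Bcy(\Pi_e;m)[-]) \overset{\simeq}\longto \Bcy(A/k;m)[-].
\]
The source is the reduced $k$-linearization of the pointed simplicial set $\Bcy(\Pi_e;m)[-]$, so its associated chain complex computes $\Htilde_*(B(m);k)$, while the target's chain complex computes $\HH_*(A/k;m)$ by definition. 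Hence $w_m$ yields an isomorphism $\Htilde_*(B(m);k) \cong \HH_*(A/k;m)$.

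Connes' $B$-operator is constructed purely from the cyclic structure maps and the extra degeneracy, both of which $w_m$ intertwines, so the two Connes operators agree under this isomorphism (up to an overall sign that can be absorbed into the choice of generator). Applying \cref{HHweightComputation} then gives both cases of the statement directly. The step requiring the most care --- the closest thing to an obstacle --- is the compatibility between the reduced linearization and the weight splitting: simplices containing the basepoint $0 \in \Pi_e$ are not assigned a weight and must be killed on the linear side via the quotient $k(\Pi_e) = k[\Pi_e]/k\{0\}$. Since such basepoint-containing simplices do not appear in $\Bcy(\Pi_e;m)[-]$ for $m \geq 1$, this is automatic, and the lemma reduces to a direct appeal to \cref{HHweightComputation}.
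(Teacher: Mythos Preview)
Your proof is correct and follows essentially the same route as the paper: use the cyclic $k$-module isomorphism $w$, observe that it respects the weight decomposition, and deduce the homology and Connes-operator statements from \cref{HHweightComputation}. The only cosmetic difference is that the paper cites \cite[Proposition 1.4.5]{Hesselholt96} for the compatibility of $w$ with Connes' operator, whereas you argue it directly from the fact that $B$ is built out of the cyclic structure maps; both are fine.
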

\begin{proof}
We use the isomorphism of cyclic $k$-modules 
\[
w : k(\Bcy(\Pi_e)[-]) \to \Bcy(A/k)[-]
\]
This map preserves the weight decomposition, mapping $k(B(m)[-])$ isomorphically to $\Bcy(A/k;m)[-]$. Furthermore the map commutes with the Connes operator, as shown in the proof of \cite[Proposition 1.4.5.]{Hesselholt96}. Now by \cref{HHweightComputation} we can read off what 
\[
\HH_*(A/k;m) = \Htilde_*(B(m);k)
\]
is and how Connes' operator acts. 
\end{proof}

Note that in particular when $e$ is zero in $k$, $\Htilde_{2d+2}(B(m);k)$ is free of rank $1$ over $k$. Thus there is room for a non-trivial Connes' operator in this case. However, it follows again from \cref{HHweightComputation} that it is trivial in this case. 

We need the following general commuting diagram that we state as a lemma.  Let $G$ be a group and $f : BG \to *$ be the projection to a point. The induced pullback functor $f^* : \Sp \to \Sp^{BG}$ admits a right adjoint 
\[
f_* = (-)^{hG} : \Sp^{BG} \longto \Sp,
\]
given by the limit functor $\lim_{BG}(-) : \Sp^{BG} = \mathrm{Fun}(BG,\Sp) \to \Sp$, \emph{cf.}~ \cite[Section I.1]{NikolausScholze}. We denote by $\epsilon : f^*f_* \to \id$ the counit of this adjunction. Following \cite[Theorem I.4]{NikolausScholze} we denote by 
\[
f_*^T : \Sp^{BG} \to \Sp
\]
the corresponding Tate $G$-construction. There is a canonical map $\alpha^T : f_*^TX \otimes f_*Y \to f_*^T(X \otimes f^*f_*Y)$ which may be defined by its adjoint: the initial map
\[
f_*^TX \longto \map(f_*Y, f_*^T(X \otimes f^*f_*Y)) 
\]
such that precomposing with
 \[
\can : f_*X \to f_*^TX
\]
 gives the composite
\[
f_*X \longto \map(f_*Y, f_*(X \otimes f^*f_*Y)) \overset{\can_*}\longto \map(f_*Y, f_*^T(X \otimes f^*f_*Y)) 
\]
(\emph{cf}.\ \cite[Section I.3]{NikolausScholze}).
\begin{lemma}\label{LaxTateDiagram}
Let $X$ and $Y$ be spectra with $G$-action. Then the following diagram commutes.
\[
\begin{tikzcd}[row sep=4em, column sep=4em]
f_*^TX \otimes f_*Y \arrow[r, "\id \otimes \can"] \arrow[d, "\alpha^T"]  & f_*^TX \otimes f_*^TY \arrow[d, "\lax"]  \\
f_*^T(X \otimes f^*f_*Y) \arrow[r,"f_*^T(\epsilon)"] & f_*^T(X \otimes Y)
\end{tikzcd}
\]
\end{lemma}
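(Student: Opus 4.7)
The plan is to check the commutativity by passing to adjoints and invoking the defining universal property of $\alpha^T$. Applying the $\otimes$–$\map$ adjunction in the second variable, both paths in the square yield maps $f_*^T X \to \map(f_* Y, f_*^T(X \otimes Y))$, and equality of these adjoints implies equality of the original maps. By the defining property of $\alpha^T$, maps of this shape are determined by their precomposition with $\can : f_* X \to f_*^T X$; that is, $\alpha^T$ itself is characterised by the equality of such a precomposition with a specified composite built only from $f_*$, and the same characterisation extends after postcomposing with the natural transformation $f_*^T(\epsilon)$. So it suffices to verify the commutativity after precomposing with $\can \otimes \id_{f_*Y}$, reducing the problem to an equality of maps $f_*X \otimes f_*Y \to f_*^T(X \otimes Y)$.

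Unwinding: on the bottom-left path, the defining property gives $\alpha^T \circ (\can \otimes \id) = \can \circ \mathrm{proj}$, where $\mathrm{proj} : f_*X \otimes f_*Y \to f_*(X \otimes f^*f_*Y)$ is the projection formula map implicit in the definition (explicitly, $\mathrm{proj} = \lax \circ (\id \otimes \eta_{f_*Y})$ for the unit $\eta : \id \to f_*f^*$ of the adjunction). Applying $f_*^T(\epsilon)$ and using naturality of $\can : f_* \to f_*^T$ to commute it past $f_*^T(\epsilon)$ yields $\can \circ f_*(\id \otimes \epsilon_Y) \circ \mathrm{proj}$. On the top-right path, $(\id \otimes \can) \circ (\can \otimes \id) = \can \otimes \can$, and the lax monoidal compatibility of $\can$ rewrites $\lax \circ (\can \otimes \can) = \can \circ \lax$. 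So the required identity reduces to
\[
\lax = f_*(\id \otimes \epsilon_Y) \circ \mathrm{proj} : f_*X \otimes f_*Y \longto f_*(X \otimes Y).
\]
This follows by expanding $\mathrm{proj} = \lax \circ (\id \otimes \eta_{f_*Y})$, using naturality of $\lax$ in the second variable to swap $f_*(\id \otimes \epsilon_Y)$ past $\lax$, and applying the triangle identity $f_*(\epsilon_Y) \circ \eta_{f_*Y} = \id_{f_*Y}$.

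The main obstacle is the first step: correctly interpreting the universal property of $\alpha^T$ so as to reduce the commutativity of the square to a compatibility at the level of $f_*$. Once that reduction is justified, the remaining work is a formal diagram chase using only the naturality of $\can$, the compatibility of $\can$ with the lax monoidal structures of $f_*$ and $f_*^T$, and the triangle identities of the adjunction $f^* \dashv f_*$.
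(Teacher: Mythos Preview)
Your proof is correct and follows exactly the same approach as the paper: pass to adjoints, reduce to checking after precomposing with $\can : f_*X \to f_*^TX$, and then invoke the construction of $\alpha^T$ together with the lax symmetric monoidality of $\can$. The paper compresses the entire verification into a single sentence (``This follows by the construction of $\alpha^T$ and by the lax symmetric monoidality of $\can_*$''), whereas you explicitly unwind the projection formula $\mathrm{proj} = \lax \circ (\id \otimes \eta_{f_*Y})$ and close with the triangle identity---this is a welcome elaboration of the same argument, not a different one.
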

\begin{proof}
Consider the adjoint maps 
\[
\alpha, \beta : f_*^TX \longto \map(f_*Y , f_*^T(X \otimes Y))
\]
determined by the lower and upper composite, respectively. It is enough to check that the maps agree after precomposing with the canonical map $f_*X \to f_*^TX$. This follows by the construction of $\alpha^T$ and by the lax symmetric monoidality of $\can_*$ \cite[Theorem I.4.1.(vi)]{NikolausScholze}.
\end{proof}

As observed in  \cite[Section IV.2]{NikolausScholze},  $\THH$ naturally forms a lax symmetric monoidal functor $\mathrm{Alg}_{E_1}(\Sp) \to \CycSp$ which is in fact symmetric monoidal, i.e.\ the lax structure map is an equivalence. For this last claim it is enough to check the equivalence on the underlying map of spectra for which see \cite[Theorem 14.1]{BlumbergMandell2017}. This symmetric monoidal structure of $\THH$ together with the $\TT$-equivariant decomposition \cref{CyclicBarDecomposition} provides us with the equivalence
\[
\THH(k(\Pi_e)) \simeq \bigoplus_{m \geq 0} \THH(k) \otimes B(m) .
\]
of spectra with $\TT$-action. We wish to identify the right hand side as a cyclotomic spectrum. By definition of the symmetric monoidal structure on cyclotomic spectra the Frobenius map in questions factors as
\begin{eqnarray*}
\THH(k) \otimes B(m) & \overset{\phi \otimes \widetilde{\psi}}\longto &  \THH(k)^{tC_p} \otimes B(pm)^{hC_p}
\\ & \overset{id \otimes \can}\longto &  \THH(k)^{tC_p} \otimes B(pm)^{tC_p}
\\ & \longto & (\THH(k)\otimes B(pm))^{tC_p}
\end{eqnarray*}
where the final map is the lax symmetric monoidal structure map for the Tate-$C_p$-construction. There is a unique such map making the natural transformation $\can : (-)^{hC_p} \to (-)^{tC_p}$ lax symmetric monoidal \cite[Theorem I.3.1]{NikolausScholze}.  We may factor the Frobenius
\[
\THH(k) \otimes B(m) \overset{\phi \otimes id}\longto \THH(k)^{tC_p} \otimes B(m) \overset{}\longto (\THH(k) \otimes B(pm))^{tC_p}
\]
as the Frobenius on $\THH(k)$ followed by the map induced by the unstable Frobenius $\widetilde{\phi} : B(m) \to B(pm)^{hC_p}$.

\begin{lemma}\label{FrobeniusLemma}
The map
\[
\THH(k)^{tC_p} \otimes B(m) \longto (\THH(k) \otimes B(pm))^{tC_p}
\]
induced by the unstable Frobenius is an equivalence.
\end{lemma}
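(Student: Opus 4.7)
The plan is to decompose the $C_p$-action on $B(pm)$ (restricted from the $\TT$-action) into its honest fixed subspace and a $C_p$-free complement, then use vanishing of the Tate construction on the free part together with the edgewise subdivision identification $B(pm)^{C_p} \cong B(m)$.

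The key geometric input is standard for the cyclic bar construction: edgewise subdivision provides a natural homeomorphism of pointed spaces $B(pm)^{C_p} \cong B(m)$, with trivial residual $C_p$-action on the fixed points, and the unstable Frobenius $\tilde{\phi} : B(m) \to B(pm)^{hC_p}$ factors as this homeomorphism followed by the canonical map $B(pm)^{C_p} \to B(pm)^{hC_p}$. Granting this, the quotient $Q := B(pm)/B(pm)^{C_p}$ is a pointed $C_p$-space with free $C_p$-action away from the basepoint, so its suspension spectrum is built cellularly from free $C_p$-cells of the form $(C_p)_+ \otimes S^n$.

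Applying $(\THH(k) \otimes -)^{tC_p}$ to the cofiber sequence $B(pm)^{C_p} \hookrightarrow B(pm) \to Q$ of pointed $C_p$-spaces yields a cofiber sequence of spectra whose third term vanishes: a standard shear argument identifies $\THH(k) \otimes (C_p)_+$ with an induced $C_p$-spectrum, and $(-)^{tC_p}$ annihilates induced spectra. This produces an equivalence
\[
(\THH(k) \otimes B(m))^{tC_p} \overset{\simeq}\longto (\THH(k) \otimes B(pm))^{tC_p},
\]
with $C_p$ acting trivially on $B(m)$. Next, since $B(m)$ is a finite pointed CW complex (non-degenerate simplices have dimension at most $m-1$), a cellular induction on $B(m)$ — using exactness of $(-)^{tC_p}$ and the tautological case $B(m) = S^0$ — identifies $\THH(k)^{tC_p} \otimes B(m) \simeq (\THH(k) \otimes B(m))^{tC_p}$ via the lax monoidal structure.

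Composing the two equivalences produces the desired result. The final task is to verify that this composite agrees with the map in the lemma, defined as the Frobenius on $\THH(k)$ smashed with $\tilde{\phi}$, followed by the lax monoidal structure for Tate. This will follow from the factorization of $\tilde{\phi}$ through $B(pm)^{C_p}$ together with naturality and the diagram of \cref{LaxTateDiagram}. I expect this last coherence check to be the main source of work — the underlying mathematics is routine, but tracking the interaction of edgewise subdivision, Tate vanishing, and the lax monoidal structure in the $\infty$-categorical sense requires care.
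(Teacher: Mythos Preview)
Your proposal is correct and follows essentially the same approach as the paper: both arguments use that $B(m)$ is a finite complex with trivial $C_p$-action to show the lax structure map $\alpha^T$ is an equivalence, that the cofiber of $B(m)\cong B(pm)^{C_p}\hookrightarrow B(pm)$ is built from free $C_p$-cells so that $(-)^{tC_p}$ kills it, and \cref{LaxTateDiagram} for the coherence. The paper packages these ingredients into a single commuting rectangle rather than composing two equivalences and checking coherence afterward, but the mathematical content is identical.
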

\begin{proof}
We abbreviate $T = \THH(k)$. 
 Consider the following diagram
\[
\begin{tikzcd}[row sep=4em, column sep=4em]
T^{tC_p} \otimes B(m) \arrow[r, "\id \otimes \widetilde{\phi}"] \arrow[d, "\alpha^T"] & T^{tC_p} \otimes B(pm)^{hC_p} \arrow[d, "\alpha^T"] \arrow[r, "\id \otimes \can"] & T^{tC_p} \otimes B(pm)^{tC_p} \arrow[d, "\mathrm{lax}"] \\
(T \otimes B(m))^{tC_p} \arrow[r, "\widetilde{\phi}^{tC_p}"] & (T \otimes B(pm)^{hC_p})^{tC_p} \arrow[r, "\epsilon^{tC_p}"] & ( T \otimes B(pm))^{tC_p}
\end{tikzcd}
\]
where $\epsilon$ is the canonical equivariant map from the fixed points of a space (with trivial $C_p$-action) to itself, and ``lax'' is the lax symmetric monoidal structure map. The map $\alpha^T$ was constructed in \cref{LaxTateDiagram}.  We claim this diagram commutes and that the left-most vertical map, as well as the bottom composite, are equivalences. Here we equip $B(m)$ with trivial $C_p$-action. Since this is a finite space the map $\alpha^T$ is an equivalence. The bottom row is the map induced by the composite
\[
B(m) \overset{\tilde{\phi}}\longto B(pm)^{hC_p} \overset{\epsilon}\longto B(pm)
\]
where the first and second terms are given trivial $C_p$-action. The cofiber is a finite colimit of free $C_p$-cells hence  applying the Tate construction induces an equivalence, see for example \cite[Lemma I.3.8.]{NikolausScholze}.

The commutativity of the left-square follows from the naturality of the map $\alpha^T$. Finally the left-square of the diagram commutes by \cref{LaxTateDiagram}. 
\end{proof}

\begin{corollary}\label{FrobeniusCorollary}
The restricted Frobenius map
\[
\phi(m) : \THH(k) \otimes B(m) \to (\THH(k) \otimes B(pm))^{tC_p}
\]
induces an isomorphism in degrees $\geq 2d+1$ when $e \nmid m$, and induces an isomorphism in degrees $\geq 2d+2$ when $e \mid m$.
\end{corollary}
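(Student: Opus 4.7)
\noindent My plan is to factor $\phi(m)$ using the decomposition displayed immediately before Lemma~\ref{FrobeniusLemma} and use that lemma to replace the Tate construction by a plain tensor product. Concretely, the second map in the factorization
\[
\THH(k) \otimes B(m) \overset{\phi \otimes \id}\longto \THH(k)^{tC_p} \otimes B(m) \longto (\THH(k) \otimes B(pm))^{tC_p}
\]
is an equivalence by Lemma~\ref{FrobeniusLemma}, so the corollary reduces to a connectivity estimate for $\phi \otimes \id_{B(m)}$.

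The key input I would use is that, for $k$ a perfect field of characteristic $p$, the Frobenius $\phi : \THH(k) \to \THH(k)^{tC_p}$ induces an isomorphism on $\pi_n$ for all $n \geq 0$; this follows from B\"okstedt's computation $\pi_*\THH(k) = k[\sigma]$ with $|\sigma|=2$ together with the Tate spectral sequence, compare \cite[\S IV.4]{NikolausScholze}. Since $\THH(k)$ is connective, it follows that the cofiber $C := \operatorname{cofib}(\phi)$ is concentrated in degrees $\leq -1$.

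The final step is to run the Atiyah-Hirzebruch spectral sequence
\[
E^2_{p,q} = \Htilde_p(B(m); \pi_q C) \Longrightarrow \pi_{p+q}(C \otimes B(m))
\]
on the cofiber $C \otimes B(m)$ of $\phi \otimes \id_{B(m)}$. By Lemma~\ref{HomologyLemma}, $\Htilde_p(B(m))$ vanishes for $p > 2d+1$ when $e \nmid m$ and for $p > 2d+2$ when $e \mid m$, while $\pi_q C = 0$ for $q \geq 0$. Any non-zero $E^2$-entry therefore has $p+q \leq 2d$ (resp.\ $\leq 2d+1$), so $C \otimes B(m)$ vanishes in degrees $\geq 2d+1$ (resp.\ $\geq 2d+2$), which gives precisely the claimed ranges of isomorphism. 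The substance of the corollary lies in Lemma~\ref{FrobeniusLemma}; once that, plus the sharp connectivity of $\phi$, is in hand, the rest is bounded-support bookkeeping.
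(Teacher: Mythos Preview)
Your argument is correct and is precisely the approach the paper takes: it invokes Lemma~\ref{FrobeniusLemma}, Lemma~\ref{HomologyLemma}, the Atiyah--Hirzebruch spectral sequence, and the fact that $\phi:\THH(k)\to\THH(k)^{tC_p}$ is an equivalence on connective covers, and you have simply made explicit the cofiber-and-connectivity bookkeeping that the paper leaves to the reader. The only cosmetic difference is that the paper cites \cite[Addendum~5.3]{HesselholtMadsen97} for the co-connectivity of $\phi$ over a general perfect field, whereas you sketch it via B\"okstedt's computation.
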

\begin{proof}
This follows readily from \cref{FrobeniusLemma} and \cref{HomologyLemma} using the Atiyah-Hirzebruch spectral sequence and the fact that the Frobenius 
\[
\phi : \THH(k) \to \THH(k)^{tC_p}
\]
 is an equivalence on connective covers (see \cite[Corollary IV.4.13]{NikolausScholze} for $k=\FF_p$, and \cite[Addendum 5.3]{HesselholtMadsen97} for any perfect field $k$).
\end{proof}


\section{Negative and periodic topological cyclic homolgy}
\noindent
In this section we compute the periodic and negative topological cyclic homology of the ring of truncated polynomials over a perfect field of characteristic $p>0$. We will use the homotopy fixed point spectral sequence and the Tate spectral sequence, which we briefly recall. Let $X$ be a  connective spectrum with $\TT$-action.  
The homotopy fixed point spectral sequence is a second quadrant spectral sequence converging to $\pi_*(X^{h\TT})$ and with $E^2$-page given by 
\[
E^2 = H^*(B\TT, \pi_*X) \simeq S_{\ZZ}\{t\} \otimes \pi_*(X)
\]
where $t$ has bidegree $(-2,0)$. Note that the $\TT$-action on $\pi_*(X)$ is necessarily trivial since $\TT$ is path-connected. The Tate spectral sequence is a half-plane conditionally convergent spectral sequence converging to $\pi_*(X^{t\TT})$ whose $E^2$-page is given by inverting $t$, i.e.
\[
E^2 = S_{\ZZ}\{t,t^{-1}\} \otimes \pi_*(X) 	\quad \Rightarrow \quad \pi_*(X^{t\TT}) .
\] 
Here $S_{\ZZ}\{t,t^{-1}\}$ is the Laurent polynomial algebra over $\ZZ$ on a generator $t$ with bidegree $(-2,0)$. It $X$ is a ring-spectrum with $\TT$-action, then both spectral sequences are multiplicative. See \cite[Section 4]{HesselholtMadsen2003} for the construction and basic properties of the Tate spectral sequence. See also the forthcoming \cite{HedenlundKrauseNikolaus} for a construction of the Tate spectral sequence in the context of the $\infty$-category of parametrized spectra.  We will also repeatedly use the following formula for the differentials on the $E^2$-page.

\begin{lemma}\label{SecondTateDifferential}
Let $X$ be a spectrum with $\TT$-action such that the underlying spectrum is an $H\ZZ$-module . The $d^2$ differential of the Tate spectral sequence is given by $d^2(\alpha) = t d(\alpha)$ where $d$ is Connes' operator.
\end{lemma}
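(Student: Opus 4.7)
The plan is to construct the Tate spectral sequence from the skeletal filtration of $E\TT$ and read off the $d^2$ differential directly from the attaching maps, using the hypothesis that $X$ is an $H\ZZ$-module to suppress unwanted correction terms. A useful preliminary reduction: by multiplicativity of the spectral sequence as a module over the Tate spectral sequence for the sphere spectrum (in which $t$ is a permanent cycle), the $d^2$ differential is $\ZZ[t, t^{-1}]$-linear, so it is determined by a single map $\pi_n X \to \pi_{n+1} X$ on the column $E^2_{0,*} = \pi_*(X)$. It then suffices to show this map agrees with Connes' operator $d$.

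The key steps are as follows. First, fix the standard $\TT$-CW filtration $S^{1} \subset S^{3} \subset \cdots \subset S^\infty = E\TT$ by unit spheres, with successive $\TT$-equivariant cofibers $S^{2n+1}/S^{2n-1} \simeq \TT_+ \wedge S^{2n}$. Applying the $\TT$-equivariant mapping spectrum $F(-,X)^{\TT}$ and extending to negative filtration degrees via the standard Tate modification (see \cite[Section 4]{HesselholtMadsen2003}) produces the Tate spectral sequence, whose filtration quotients identify with $\Sigma^{-2n} X$ and whose $E^2$-page is $\pi_*(X)\{t, t^{-1}\}$ (the $d^1$ differential vanishes because the attaching maps are non-equivariantly null-homotopic). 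Second, unwind $d^2$: for $\alpha \in \pi_n X$, the differential $d^2(\alpha)$ is the obstruction to lifting a representative map through two consecutive filtration steps, which is encoded by the attaching map of the lowest non-trivial free $\TT$-cell. Third, identify this obstruction: under the adjunction between $\TT$-equivariant maps out of $\TT_+ \wedge (-)$ and ordinary maps, this attaching map corresponds to the identity $S^1 \to \TT$, so composing with the $\TT$-action map $\TT_+ \wedge X \to X$ produces exactly the composite $S^1 \wedge S^n \to S^1 \wedge X \to X$ defining Connes' operator on $\alpha$. The factor of $t$ records the shift in filtration.

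The main obstacle is executing the third step rigorously, in particular ensuring that no additional correction terms appear. On general spectra, higher-order attaching maps in $B\TT$ can contribute Hopf-type corrections via $\eta$, which would disturb the clean formula. The hypothesis that $X$ is an $H\ZZ$-module is precisely what is needed to suppress these: since $\eta$ acts trivially on $H\ZZ$-modules, any such correction vanishes and one obtains $d^2(\alpha) = t \cdot d(\alpha)$ on the nose. This formula is classical in the literature on Tate spectral sequences for $\TT$-spectra, and the argument above mirrors the classical treatment.
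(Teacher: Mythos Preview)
The paper does not prove this lemma at all; its entire proof is the sentence ``See \cite[Lemma~1.4.2]{Hesselholt96} or \cite{HedenlundKrauseNikolaus}.'' Your sketch therefore already goes beyond what the paper supplies, and it follows the same filtration strategy that those references use: filter $E\TT$ by odd spheres, identify the associated graded of the Tate construction with shifts of $X$, and read the first nontrivial differential off the $\TT$-action map.

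Two small corrections to your exposition. First, with the odd-sphere filtration $S^1 \subset S^3 \subset \cdots$ each step already jumps by two in filtration, so the first differential in that tower \emph{is} $d^2$ in the standard bigrading; the vanishing of $d^1$ is automatic because the odd columns are zero, not because of a null-homotopy. Second, your explanation of the $H\ZZ$-module hypothesis via ``$\eta$-corrections to $d^2$'' is misplaced: the Hopf map $\eta$ is the attaching map of the $4$-cell in $B\TT = \mathbb{CP}^\infty$ and so enters only into higher phenomena, not into $d^2$. The identity $d^2(\alpha) = t\cdot d(\alpha)$ in fact holds for arbitrary $\TT$-spectra. The role of the $H\ZZ$-module hypothesis is rather to give Connes' operator its usual algebraic meaning on $\pi_*X$ (for general spectra one has only $d\circ d = \eta\cdot d$ rather than $d\circ d = 0$, so the $H\ZZ$-structure is what makes $(\pi_*X, d)$ an honest mixed complex). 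These are refinements of wording, not gaps in the argument.
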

\begin{proof}
See \cite[Lemma 1.4.2]{Hesselholt96} or \cite[Section on Tate spectral sequence]{HedenlundKrauseNikolaus}.  
\end{proof}
\noindent For a ring $A$, one defines the negative (respectively, periodic) topological cyclic homology $\TC^-(A)$ (respectively, $\TP(A)$) by taking the homtopy fixed points (respectively, Tate constuction) on the spectrum with $\TT$-action $X = \THH(A)$.

Returning to the ring of truncated polynomials, we will compute $\TP$ and $\TC^{-}$ using an inductive procedure based on the $p$-adic valuation of the integer $m$ indexing the $\TT$-space $B(m)$. We choose generators for the homology of the spaces $B(m)$ following \cref{HomologyLemma}. If $e \nmid m$ let $y_m$ and $z_m$ be generators for the homology in degree $2d$ and $2d+1$, respectively. In this case the $E^2$-page of the Tate spectral sequence calculating $\pi_*((\THH(k) \otimes B(m))^{t\TT})$ is given by
\[
E^2 = k[t^{\pm 1},x]\{y_m,z_m\}
\]
where $y_m$ and $z_m$ have bidegrees $(0,2d)$ and $(0,2d+1)$ respectively.
If $e \mid m$ and $p \mid e$ then we let $z_m$ and $w_m$ be generators of the homology in degree $2d+1$ and $2d+2$, respectively. Then the $E^2$-page of the Tate spectral sequence is given by
\[
E^2 = k[t^{\pm 1},x]\{z_m,w_m\}
\]
where $z_m$ and $w_m$ have bidegrees $(0,2d+1)$ and $(0,2d+2)$ respectively.

Before stating the next lemma we need to introduce an important tool, namely a $\TT$-equivariant map (in fact it is a map of $p$-cyclotomic spectra) 
\[
H\ZZ_p \to \THH(k)
\]
Here $H\ZZ_p$ is given the trivial $\TT$-action. To get this map we use the calculation $\tau_{\geq 0} \TC(k) \simeq H\ZZ_p$ \cite[Theorem B]{HesselholtMadsen97} giving the $\TT$-equivariant map 
\[
H \ZZ_p \simeq \tau_{\geq 0} \TC(k) \to \TC(k) \to \TC^-(k) \to \THH(k).
\] 

\begin{lemma}\label{InfiniteCyclesLemma}
In the Tate spectral sequence converging to $\pi_*(( \THH(k) \otimes B(m))^{t\TT} )$ the class $z_m$ is an infinite cycle for all $m$.
\end{lemma}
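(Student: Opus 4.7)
My plan is to compare the Tate spectral sequence in question with the much sparser Tate spectral sequence computing $\pi_*((H\ZZ_p \otimes B(m))^{t\TT})$, by means of the $\TT$-equivariant map $H\ZZ_p \to \THH(k)$ introduced just above. Tensoring this map with $B(m)$ yields a map of spectra with $\TT$-action, and thereby a map of Tate spectral sequences. The crucial advantage is that $\pi_* H\ZZ_p$ is concentrated in degree zero, so the source $E^2$-page is simply $\ZZ_p[t^{\pm 1}] \otimes \Htilde_*(B(m); \ZZ_p)$, with no polynomial class $x$ widening its vertical support.

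First I would invoke \cref{HomologyLemma} with coefficient ring $\ZZ_p$ to control the vertical support of this source $E^2$-page. When $e \nmid m$ the support lies in degrees $2d$ and $2d+1$. When $e \mid m$ (necessarily with $p \mid e$ for $z_m$ to exist), the degree-$(2d+2)$ term is $\ZZ_p[e] = 0$, because $\ZZ_p$ is torsion-free, leaving only degree $2d+1$. In either case, the vertical support is contained in $\{2d, 2d+1\}$.

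Next, for $r \geq 2$ the putative differential $d^r(z_m)$ lands in bidegree $(-r, 2d+r)$ with $2d + r \geq 2d+2$, which is above the vertical support of the source $E^2$-page and hence zero. Consequently $z_m$ is tautologically an infinite cycle in the source Tate spectral sequence, and by naturality of spectral sequences its image in the target Tate spectral sequence is an infinite cycle as well.

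The remaining point, which I expect to be the only thing that requires explicit verification, is that the comparison map sends the source generator $z_m$ to a nonzero scalar multiple of the target class $z_m$. This follows by identifying the induced map on $E^2$-pages in vertical degree $2d+1$ with the coefficient change $\Htilde_{2d+1}(B(m); \ZZ_p) \to \Htilde_{2d+1}(B(m); k)$ followed by the edge map of the (degenerate) Atiyah-Hirzebruch spectral sequence for $\THH(k) \otimes B(m)$; since the underlying ring map $\ZZ_p \to k$ sends $1 \mapsto 1$, a generator is sent to a generator in both the $e \nmid m$ case (where both groups are free of rank one) and the $e \mid m$ case (where the map $\ZZ_p/p^u \to k$ is the reduction, again sending $1$ to $1$).
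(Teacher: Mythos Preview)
Your proposal is correct and follows essentially the same route as the paper: both use the $\TT$-equivariant map $H\ZZ_p \to \THH(k)$ to pass to a sparser Tate spectral sequence with $E^2$-page $\ZZ_p[t^{\pm 1}] \otimes \Htilde_*(B(m);\ZZ_p)$, observe that $z_m$ is an infinite cycle there, and push this forward by naturality. Your version is marginally more streamlined (you note directly that $d^r(z_m)$ lands above the vertical support, whereas the paper computes $d^2(y_m)$ explicitly in the $e\nmid m$ case), and you make explicit the check that the comparison map carries the source generator $z_m$ to a unit multiple of the target $z_m$, which the paper leaves implicit.
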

\begin{proof}
Although the statement does not seem to require it, we must deal with the cases $e \mid m$ and $e \nmid m$ separately. In both cases we use the $\TT$-equivariant map $H\ZZ_p \to \THH(k)$ constructed in the preceding paragraph. 
This map induces a map from the Tate spectral sequences computing $\pi_*((H\ZZ_p \otimes B(m))^{t\TT})$ to the Tate spectral sequence computing $\pi_*((\THH(k) \otimes B(m))^{t\TT})$.

Suppose first that $e \nmid m$. Then from \cref{HomologyLemma} we may compute the $E^2$-page of the Tate spectral sequence for $H\ZZ_p  \otimes B(m)$ to be
\[
E^2 =\ZZ_p[t^{\pm 1}]\{y_m, z_m\} \quad \Rightarrow \quad \pi_*(H\ZZ_p \otimes B(m))^{t\TT}
\]
where $|y_m| = (0,2d)$ and $|z_m| = (0,2d+1)$. The differential structure is determined by $d^2(y_m) = mtz_m$ (using \cref{SecondTateDifferential}), and so $E^3 = E^\infty = \ZZ_p/m\ZZ_p[t^{\pm 1}]\{z_m\}$ so $z_m$ is an infinite cycle. It follows that $z_m \in k[t^{\pm 1}, x]\{y_m, z_m\}$ (the $E^2$ page for the target spectral sequence) is an infinite cycle.

Now suppose $e \mid m$. Then from \cref{HomologyLemma} we may compute the $E^2$-page of the Tate spectral sequence for $H\ZZ_p  \otimes B(m)$ to be
$\ZZ_p/e\ZZ_p[t^{\pm 1}]\{z_m\}$
with $|z_m| = (0,2d+1)$ from which it follows immediately that $z_m$ is an infinite cycle.
\end{proof}


\begin{proposition}\label{TPordinary}
Write $m = p^vm'$ where $(m',p)=1$. If $e \nmid m$ then 
\[
\pi_{2r+1}(\THH(k) \otimes B(p^vm'))^{t\TT} \simeq W_v(k)
\]
for all $r \in \ZZ$, and
\[
\pi_{2r+1}(\THH(k) \otimes B(p^vm'))^{h\TT} \simeq  \left\{
	\begin{array}{ll}
		  W_{v+1}(k) & \mbox{if } d \leq r \\
		  W_v(k) & \mbox{if } r < d
	\end{array}
\right.
\]
The even homotopy groups are trivial.
\end{proposition}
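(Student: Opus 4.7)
The plan is to induct on $v$, the $p$-adic valuation of $m$, running the Tate and homotopy fixed point spectral sequences in tandem and leveraging the Frobenius equivalence from \cref{FrobeniusCorollary} together with the $t$-periodicity of the Tate construction. Write $X_m = \THH(k) \otimes B(m)$. By \cref{HomologyLemma} the Tate $E^2$-page is $k[t^{\pm 1}, x]\{y_m, z_m\}$ with $y_m$ in bidegree $(0, 2d)$ and $z_m$ in $(0, 2d+1)$, while the homotopy fixed point $E^2$-page is the subalgebra $k[t, x]\{y_m, z_m\}$. \Cref{SecondTateDifferential} computes $d^2(y_m) = m t z_m = p^v m' t z_m$, and \cref{InfiniteCyclesLemma} ensures that $z_m$ is a permanent cycle.

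For the base case $v = 0$, the integer $m'$ is a unit in $k$, so $d^2$ is a $k[t^{\pm 1}, x]$-linear isomorphism between the two summands of $E^2$. In the Tate spectral sequence this forces $E^3 = 0$, giving $\pi_* X_m^{t\TT} = 0 = W_0(k)$. In the homotopy fixed point spectral sequence the image of $d^2$ is $t \cdot k[t, x]\{z_m\}$, whose cokernel is $k[x]\{z_m\}$; this contributes a single copy of $k \cong W_1(k) = W_{v+1}(k)$ in each odd degree $\geq 2d+1$ and nothing in lower degrees, with all even classes killed.

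For the inductive step, assume the result for $n$ with $p$-adic valuation $v$ and let $m = pn$. Since $p = 0$ in $k$, $d^2$ vanishes on $y_m$ and higher differentials are needed. The Frobenius of \cref{FrobeniusCorollary} is an equivalence $X_n \to X_m^{tC_p}$ in degrees $\geq 2d(e, n) + 1$. Taking $\TT/C_p \simeq \TT$-homotopy fixed points and using the Nikolaus--Scholze identification $X_m^{t\TT} \simeq (X_m^{tC_p})^{h(\TT/C_p)}$ produces a map $X_n^{h\TT} \to X_m^{t\TT}$ that is an isomorphism on $\pi_*$ above a threshold. The inductive hypothesis gives $\pi_{2r+1} X_n^{h\TT} \cong W_{v+1}(k)$ for large $r$, so $\pi_{2r+1} X_m^{t\TT} \cong W_{v+1}(k)$ in that range. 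Since $X_m^{t\TT}$ is a $\TP(k)$-module with $t \in \pi_{-2} \TP(k)$ a unit, multiplication by $t$ propagates the answer to all $r \in \ZZ$ and forces the even groups to vanish. The $h\TT$ groups for $X_m$ are then read off from the homotopy fixed point spectral sequence, whose higher differentials agree with the Tate differentials restricted to $t^i$ with $i \geq 0$; the $t^0$-column contributes extra permanent cycles in degrees $r \geq d(e, m)$, promoting $W_{v+1}(k)$ to $W_{(v+1)+1}(k)$ there, while $W_{v+1}(k)$ remains below.

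The main obstacle will be making the inductive step precise: the Nikolaus--Scholze identification $X_m^{t\TT} \simeq (X_m^{tC_p})^{h(\TT/C_p)}$, the degree range in which the induced map $X_n^{h\TT} \to X_m^{t\TT}$ is an equivalence (high enough for $t$-periodicity to reach all $r$), and pinning down the extra column of permanent cycles in the $h\TT$ spectral sequence. A subsidiary issue is that the inductive hypothesis requires $e \nmid n$ when $m = pn$, which is not automatic from $e \nmid m$; the induction must therefore be set up jointly with the companion computation for the case $e \mid m$.
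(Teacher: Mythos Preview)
Your overall strategy matches the paper's: induct on $v$, settle the base case by the $d^2$-differential, and in the inductive step combine \cref{FrobeniusCorollary} with $t$-periodicity to get the Tate groups, then truncate to the second quadrant for the homotopy fixed points. Two points, one minor and one substantive.

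First, your closing worry is not an issue. If $m=pn$ and $e\nmid m$, then $e\nmid n$ automatically, since $n\mid m$; so the induction stays entirely within the $e\nmid m$ regime and there is no need to couple it with the $e\mid m$ case.

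Second, there is a genuine gap in your inductive step for the $h\TT$ groups: you have not solved the extension problem. After identifying the $E^\infty$-page, you assert that the extra column of permanent cycles ``promotes $W_{v+1}(k)$ to $W_{v+2}(k)$'', but the spectral sequence only gives you an associated graded with $v+2$ copies of $k$ in each odd degree $\geq 2d+1$; it does not tell you that these assemble into a cyclic $W(k)$-module rather than, say, $W_{v+1}(k)\oplus k$. The paper handles this by observing that the homology class $z_m$ defines a map of mixed complexes $\ZZ[2d+1]\to \ZZ\otimes B(m)$ (because Connes' operator vanishes on $z_m$), which after tensoring with $\THH(k)$ over $\ZZ_p$ yields a $\TT$-equivariant map
\[
\THH(k)[2d+1]\longrightarrow \THH(k)\otimes B(m).
\]
The induced map of spectral sequences compares the target to the known filtration on $\pi_*\THH(k)^{t\TT}\cong W(k)[t^{\pm1}]$, where the extensions are already solved (with $p$ lifting $xt$), and $W(k)$-linearity of the induced map on homotopy then forces $\pi_{2r+1}(\THH(k)\otimes B(m))^{h\TT}$ to be cyclic. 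You will need this or an equivalent argument; the spectral sequence alone is not enough.
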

\noindent In the following proof, and the rest of the paper, a dot above an equality indicates that the equality holds up to a unit. 
\begin{proof}
We proceed by induction on $v \geq 0$. Suppose $v = 0$, so $m = m'$, and consider the Tate spectral sequence 
\[
E^2 = k[t^{\pm 1}, x]\{y_{m'} , z_{m'} \} \quad \Rightarrow \quad \pi_*(\THH(k) \otimes B(m') )^{t\TT}
\]
By \cref{InfiniteCyclesLemma} the only possible non-zero differentials are those beginning at $y_{m'}$. Furthermore 
\[
d^2(y_{m'}) \overset{.}= t d(y_{m'}) \overset{.}= m' t z_{m'}
\]
 by \cref{SecondTateDifferential} and \cref{HomologyLemma}. Since $m'$ is a unit in $k$, $d^2$ is an isomorphism. The $E^2$-page is summarized in the following diagram (shifted up by $2d$ in the horizontal direction).

\begin{figure}[h]
\centering
    \begin{tikzpicture}[scale = 0.85, every node/.style={transform shape}]
    \draw (-7.5,-0.5) -- (6.8,-0.5);
    \draw (0.4,-0.5) -- (0.4,5);
    \node at (0,0) {$y_{m'}$};
    \node at (0,1) {$z_{m'}$};
    \node at (0,2) {$xy_{m'}$};
    \node at (0,3) {$xz_{m'}$};
    \node at (0,4) {$x^2y_{m'}$};
    \node at (0,5) {$x^2z_{m'}$};
    \node at (0,6) {$\vdots$};
    \node at (-2,0) {$ty_{m'}$};
    \node at (2,0) {$t^{-1}y_{m'}$};
    \node at (4,0) {$t^{-2}y_{m'}$};
    \node at (5,0) {$\cdots$};
    

    \draw[->] (-2.3,0.2) -> (-3.8,0.9); 
    \draw[->] (-0.3,0.2) -> (-1.8,0.9); 
    \draw[->] (1.7,0.2) -> (0.2,0.9);   
    \draw[->] (3.7,0.2) -> (2.2,0.9);    
    \draw[->] (-2.3,2.2) -> (-3.8,2.9);
    \draw[->] (-0.3,2.2) -> (-1.8,2.9); 
    \draw[->] (1.7,2.2) -> (0.2,2.9);   
    \draw[->] (3.7,2.2) -> (2.2,2.9);    
    \draw[->] (-0.3,4.2) -> (-1.8,4.9);
    \draw[->] (1.7,4.2) -> (0.2,4.9);
    \draw[->] (3.7,4.2) -> (2.2,4.9); 
    \draw[->] (-2.3,4.2) -> (-3.8,4.9);
    \node at (-3.5,0) {$\cdots$};
    \node at (-4,4) {$\cdots$};
    \node at (-4,2) {$\cdots$};
    \node at (5,4) {$\cdots$};
    \node at (5,2) {$\cdots$};
    \end{tikzpicture}
\end{figure}

Thus $E^3 = E^\infty = 0$ is trivial, as claimed. To determine the $\TT$-homotopy fixed points, we truncate the Tate spectral sequence, removing the first quadrant. The classes $x^nz_{m'}$ are no longer hit by differentials and so 
\[
E^3 = E^\infty = k[x]\{z_{m'}\}
\]
where $z_{m'}$ has degree $2d+1$. This proves the claim for $v = 0$.

Suppose the claim is known for all integers less than or equal to $v$. By \cref{FrobeniusCorollary} the Frobenius
\[
\phi(p^vm')^{h\TT} : \pi_* (\THH(k) \otimes B(p^vm'))^{h\TT} \to \pi_* (\THH(k) \otimes B(p^{v+1}m'))^{t\TT}
\]
is an isomorphism in high degrees. The induction hypothesis then implies that the domain is isomorphic to $W_{v+1}(k)$ when $* = 2r+1 \geq 2d+1$.   By periodicity we conclude that $\pi_*(\THH(k) \otimes B(p^{v+1}m'))^{t\TT}$ is concentrated in odd degrees where, 
\[
\pi_{2r+1} (\THH(k) \otimes B(p^{v+1}m'))^{t\TT} \simeq W_{v+1}(k)
\]
for any $r \in \ZZ$. Considering again the Tate spectral sequence we see that we must have
\[
d^{2v + 2}(y_{p^{v+1}m'}) \overset{.}= t (xt)^{v}  z_{p^{v+1}m'}
\]
and so $E^{2v + 3} = E^\infty$.  Truncating the spectral sequence to obtain the homotopy fixed-point spectral sequence, we now see that
\[
\pi_{2r+1}(\THH(k) \otimes B(p^{v+1}m'))^{h\TT} \simeq \left\{
	\begin{array}{ll}
		  W_{v+2}(k) & \mbox{if } d \leq r \\
		  W_{v+1}(k) & \mbox{if } r < d
	\end{array}
\right.
\]
At least up to extension problems. To solve these we note that the  homology class $z_m$ in $B(m)$ provides a map of chain complexes
\[
f_{z_m} : \ZZ[2d+1] \to \ZZ \otimes B(m)
\]
Where $\ZZ[2d+1]$ denotes the chain complex concentrated in degree $2d+1$. We claim that $f_{z_m}$ may in fact be promoted to a map of chain complexes with $\TT$-action, i.e.\ a map in the $\infty$-category $\mathcal{D}(\ZZ)^{B\TT}$. This category is equivalent to the $\infty$-category of mixed complexes over $\ZZ$\footnote{As noted in \cite[Paragraph after Theorem 1]{HesselholtNikolausCusp} this follows from the formality of $C_*(\TT,\ZZ)$ as an $\mathbb{E}_1$-algebra.} hence it suffices to promote $f_{z_m}$ to a map of mixed complexes, i.e.\ a map which commutes with Connes operator. The domain of $f_{z_m}$ is given the trivial $\TT$-action, or equivalently trivial mixed complex structure, with trivial Connes operator. The codomain has the mixed complex structure used in the proof of \cref{HHweightComputation}, which refers back to \cite[Proposition 2.1]{BACH91}. From there we see that Connes operator acts as zero on any representative of the homology class $z_m$.  Thus $f_{z_m}$ may be equipped with the structure of a $\TT$-equivariant map with trivial $\TT$-action on the domain. Upon tensoring with $\THH(k)$ over $\ZZ$ (using the module $\ZZ$-structure given by the map we constructed before \cref{InfiniteCyclesLemma} we get a $\TT$-equivariant map 
\[
\THH(k)[2d+1] \to \THH(k) \otimes B(m) 
\]
In particular this induces a map of Tate spectral sequences. This allows us to conclude that the $W(k)$-module $\pi_{2r+1}(\THH(k) \otimes B(m))^{h\TT}$ is cyclic, hence determined by its length. Indeed, given $\alpha \in \pi_{2r+1}(\THH(k) \otimes B(m))^{h\TT}$ with image $\bar{\alpha}$ on the $E^\infty$-page, it has the form $t^{a}x^az_m$ for some $a \geq 0$ and so is hit by $t^{a}x^a$ on the $E^\infty$-page for $\THH(k)^{t\TT}$, where the extension problem has already been solved. Up to a unit $p^a$ lifts $t^ax^a$. Since the map 
\[
(\THH(k)[2d+1])^{t\TT} \to (\THH(k) \otimes B(m))^{t\TT}
\]
is $W(k)$-linear we see that $\alpha \overset{.}= p^az_m$. 
 This completes the proof.
\end{proof}

To deal with the case where $e$ does divide $m$ we factor $e = p^u e'$ where $(p,e')= 1$. Thus $e \mid m$ if and only if $v \geq u$ and $e' \mid m'$.

\begin{proposition}\label{TPspecial}
Write $m = p^vm'$ where $(m',p)=1$. If $e \mid m$ then 
\[
\pi_{2r+1}(\THH(k) \otimes B(p^vm'))^{t\TT} \simeq W_u(k)
\]
 and
\[
\pi_{2r+1}(\THH(k) \otimes B(p^vm'))^{h\TT} \simeq W_u(k)
\]
for all $r \in \ZZ$.
\end{proposition}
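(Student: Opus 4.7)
The strategy mirrors the proof of \cref{TPordinary}, with induction on $v \geq u$ (the condition $e \mid m$ forces $v \geq u$). If $u = 0$ then $e$ is a unit in $k$, so by \cref{HomologyLemma} we have $\Htilde_*(B(m); k) = 0$ whenever $e \mid m$, hence $\THH(k) \otimes B(m) \simeq 0$ and the statement holds trivially with $W_0(k) = 0$. Henceforth assume $u \geq 1$.

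For the base case $v = u$, consider $p^{u-1}m'$, which satisfies $e \nmid p^{u-1}m'$ because $(p, m') = 1$ prevents $p \mid m'$. Applying \cref{TPordinary} to this index yields $\pi_{2r+1}(\THH(k) \otimes B(p^{u-1}m'))^{h\TT} \simeq W_u(k)$ for $r$ sufficiently large. By \cref{FrobeniusCorollary}, the induced map $\phi(p^{u-1}m')^{h\TT}$ is an isomorphism in high degrees onto $\pi_{2r+1}(\THH(k) \otimes B(p^u m'))^{t\TT}$, and Tate periodicity extends this identification to all $r \in \ZZ$. The induction step $v \to v+1$ proceeds identically: given the $h\TT$-result at $v \geq u$, \cref{FrobeniusCorollary} (now in its second form, since $e \mid p^v m'$) together with periodicity deliver the $t\TT$-result at $v+1$.

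It remains to deduce the $h\TT$-result from the $t\TT$-result at each $v \geq u$. We analyze the Tate spectral sequence with $E^2 = k[t^{\pm 1}, x]\{z_m, w_m\}$, and its truncation, the HFPSS, with $E^2 = k[t, x]\{z_m, w_m\}$. The class $z_m$ is an infinite cycle by \cref{InfiniteCyclesLemma}, and since Connes' operator vanishes in the case $e \mid m$ (\cref{HomologyLemma}), \cref{SecondTateDifferential} gives $d^2 = 0$ on both generators. The already-established Tate answer $W_u(k)$ in each odd total degree, together with the multiplicativity of the spectral sequence and the fact that $t$ and $x$ are permanent cycles, forces a non-trivial differential of the form $d^{2u}(w_m) \overset{.}= t^u x^u z_m$ up to a unit. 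The surviving $z_m$-classes $t^a x^b z_m$ in total degree $2r + 1$ are then those with $b = a + (r - d)$, $a, b \geq 0$, and $\min(a, b) < u$, which contributes exactly $u$ classes; since $d^{2u}$ is injective on the $w_m$-submodule, all $w_m$-classes die. The truncation to the HFPSS preserves both counts in every total degree.

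Extensions are handled exactly as in the proof of \cref{TPordinary}: since Connes' operator vanishes on a representative of $z_m$ (by the explicit formula of \cite[Proposition 2.1]{BACH91}), the map $f_{z_m}$ promotes to a $\TT$-equivariant map $\THH(k)[2d+1] \to \THH(k) \otimes B(m)$, inducing a $W(k)$-linear map of spectral sequences. This identifies each odd-degree homotopy group as a cyclic $W(k)$-module. Comparison with the Tate spectral sequence for $H\ZZ_p \otimes B(m)$, whose $E^2 = E^\infty = \ZZ/p^u[t^{\pm 1}]\{z_m\}$ admits no differentials, shows the module is annihilated by $p^u$, hence isomorphic to $W_u(k)$. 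The principal obstacle is the identification of the differential $d^{2u}(w_m)$: unlike in \cref{TPordinary}, Connes' operator vanishes here, so one cannot read the differential off from \cref{SecondTateDifferential}, and instead its form must be forced by the required shape of the $E^\infty$-page together with multiplicativity of the spectral sequence.
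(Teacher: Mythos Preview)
Your proof is correct and follows the same strategy as the paper: bootstrap from \cref{TPordinary} via the Frobenius and periodicity, then force the differential $d^{2u}(w_m) \overset{.}{=} (tx)^u z_m$ from the known Tate answer and read off the homotopy fixed point spectral sequence by truncation, with extensions resolved via the equivariant map from $\THH(k)[2d+1]$. Your description of surviving classes with ``$a,b \geq 0$'' is the count for the homotopy fixed point spectral sequence rather than the Tate spectral sequence (where $t$ is invertible and the survivors are simply $t^a x^b z_m$ with $0 \leq b < u$), and the additional $H\ZZ_p$-comparison is correct but redundant, since cyclicity over $W(k)$ together with the length already pins down the module.
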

\begin{proof}
If either $u$ or $v$ is zero then $\Htilde_*(B(m);k)$ is trivial in every degree by \cref{HomologyLemma}. The result easily follows. For the rest of the cases we use induction on $v \geq u \geq 1$. Suppose $v = u$. Then 
\[
\pi_*(\THH(k) \otimes B(p^{v-1}m') )^{h\TT} \overset{\phi(p^{v{-}1}m')}{\longto} \pi_*(\THH(k) \otimes B(p^vm') )^{t\TT}
\]
is an isomorphism in high enough degrees. The domain was evaluated in \cref{TPordinary}, it is $W_u(k)$ in odd degrees greater than or equal to $2d+1$. By periodicity we conclude the result for the codomain. 
Now suppose the result has been verified for all integers greater than $u$ and strictly less than $v$. Again using the  Frobenius we conclude that 
\[
\pi_{2r+1}(\THH(k) \otimes B(p^vm'))^{t\TT} \simeq W_u(k)
\]
for all $r \in \ZZ$. 

Consider the Tate spectral sequence with $E^2$-page $k[t^{\pm 1}, x]\{z_m,w_m\}$.
Since $z_m$ is an infinite cycle the only possible way that this sequence collapses to yield the correct result is if
\[
d^{2u}(w_m) \overset{.}{=}  (tx)^u z_m
\]
Thus $E^{2u+1} = E^\infty$. As before, by truncating the first quadrant, we get the spectral sequence for the homotopy  $\TT$-fixed points whose $E^{2u}$-page clearly shows the result. The extension problem is solved as in the proof of \cref{TPordinary}.
\end{proof}



\section{Topological cyclic homology}
\noindent
We now prove \cref{KofTruncatedPoly}.
By McCarthy's Theorem \cite{McCarthy} it suffices to prove the following.

\begin{theorem}\label{TCofTruncatedPoly}
Let $k$ be a perfect field of positive characteristic. Then there is an isomorphism
\[
\TC_{2r-1}(k[x]/(x^e),(x)) \simeq \WW_{re}(k) / V_e \WW_r(k)
\]
and the groups in even degrees are zero.
\end{theorem}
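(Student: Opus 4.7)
The plan is to apply the Nikolaus-Scholze equalizer formula
\[
\TC(A,(x);p) \simeq \mathrm{eq}\bigl( \TC^{-}(A,(x)) \xrightarrow{\phi - \can} \TP(A,(x)) \bigr)
\]
and to decompose it along the orbits of the Frobenius on the weight-split $\THH$. By \cref{CyclicBarDecomposition} we have $\THH(A,(x)) \simeq \bigoplus_{m \geq 1} \THH(k) \otimes B(m)$ as cyclotomic spectra, and the Frobenius $\phi$ sends the $m$-th summand to the $pm$-th one. Since the spaces $B(p^v m')$ become more connective as $v$ grows, the direct sum commutes with the Tate and homotopy-fixed-point constructions in each homotopy degree, and the equalizer factors as a product over the orbits $\{p^v m' : v \geq 0\}$, each indexed by a unique $m' \geq 1$ coprime to $p$. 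Thus
\[
\TC(A,(x);p) \simeq \prod_{\substack{m' \geq 1 \\ (p,m')=1}} \TC^{(m')},
\]
where each $\TC^{(m')}$ is the equalizer of $\phi - \can$ restricted to one orbit.

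For each orbit I would evaluate $\pi_{2r-1}\TC^{(m')}$ via the long exact sequence of the equalizer, feeding in the Witt-vector values from \cref{TPordinary} and \cref{TPspecial}. Setting $s = s(p,re,m')$ and writing $e = p^u e'$ with $(p,e')=1$, one checks that in degree $2r-1$ the homotopy-fixed-point term at index $v$ equals $W_{v+1}(k)$ exactly when $p^v m' \leq re$, i.e.\ when $v \leq s - 1$, and drops back to $W_v(k)$ for $v \geq s$, while the Tate term is $W_v(k)$. In the divisible case $e' \mid m'$, both lengths stabilize at $W_u(k)$ once $v \geq u$. Using the $\TT$-equivariant map $H\ZZ_p \to \THH(k)$ constructed before \cref{InfiniteCyclesLemma} together with the multiplicativity of the Tate and homotopy-fixed-point spectral sequences, the canonical map is identified with the standard Witt restriction $R : W_{v+1}(k) \to W_v(k)$, while in the Frobenius-iso range of \cref{FrobeniusCorollary} the map $\phi$ is identified with the Witt Frobenius $F$ up to unit.

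Solving the equalizer condition $\can(x_v) = \phi(x_{v-1})$ for $v \geq 1$ together with $\can(x_0) = 0$ then yields a coherent tower of restrictions whose free parameter lives in $W_{h(p,r,e,m')}(k)$, with $h$ as in \cref{WittSplit}. Combining over $m'$ and invoking \cref{WittSplit} identifies the product with $\WW_{re}(k)/V_e \WW_r(k)$, proving the odd-degree isomorphism. Vanishing of the even homotopy groups follows from the same analysis: in each odd degree the restriction $R$ is surjective between consecutive truncated Witt groups, and $\phi$ fills in the rest in the iso range, so $\phi - \can$ is surjective and the connecting homomorphism in the long exact sequence produces no new classes.

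The main obstacle is the explicit identification of $\phi$ and $\can$ with the Witt-vector operations $F$ and $R$ precisely at the transitions $v = s$ and $v = u$, so that the orbit equalizer telescopes cleanly to a single $W_h(k)$ without spurious cyclic summands. This relies on the Frobenius-iso range from \cref{FrobeniusCorollary} matching exactly the transition index $s$ appearing in the formulas of \cref{TPordinary} and \cref{TPspecial}, and on a careful tracking of the multiplicative structure of the relevant spectral sequences through the comparison map from $H\ZZ_p$.
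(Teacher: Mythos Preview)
Your overall strategy---decompose along Frobenius orbits indexed by $m'$ coprime to $p$, compute the equalizer of $\phi - \can$ on each orbit using the Witt-vector values from \cref{TPordinary} and \cref{TPspecial}, then reassemble via \cref{WittSplit}---is exactly the paper's. The surjectivity argument for even-degree vanishing is also the same.

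Where you diverge is in the per-orbit computation. You propose to identify $\can$ with the Witt restriction $R$ and $\phi$ with the Witt Frobenius $F$, and then solve the system $\can(x_v) = \phi(x_{v-1})$ directly. The paper instead splits the product $\prod_{v \geq 0}$ into the ranges $v < s$ and $v \geq s$ and observes that on the tail $v \geq s$ (where $\TC^-$ and $\TP$ have the same length) the map $\phi - \can$ is an isomorphism: $\can$ is an isomorphism there, while $\phi$ \emph{raises Tate filtration and is therefore divisible by $p$}, so $\phi - \can$ is lower-triangular with invertible diagonal. The snake lemma then reduces everything to the finite range $0 \leq v < s$, where a direct count (or, equivalently, your back-substitution using that $\phi$ is an isomorphism in this range by \cref{FrobeniusCorollary}) gives kernel $W_s(k)$. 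The case $e' \mid m'$ is handled by the same splitting with the additional cutoff at $v = u$.

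This sidesteps your ``main obstacle'' entirely: the paper never identifies $\phi$ with a named Witt operation. Your identification of $\can$ with $R$ up to unit is harmless (any surjection of cyclic $W(k)$-modules $W_{v+1}(k) \to W_v(k)$ has that form), but the claim that $\phi$ equals $F$ up to unit is imprecise---on the $\THH(k)$ factor the cyclotomic Frobenius is $F$-semilinear rather than $W(k)$-linear, and after tensoring with $B(m)$ the only facts you actually need are that $\phi$ is an isomorphism for $v \leq s-1$ (from \cref{FrobeniusCorollary}) and $p$-divisible for $v \geq s$ (from the spectral-sequence filtration). Replacing your explicit identification by these two softer facts, your argument and the paper's coincide.
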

\begin{proof}
In view of \cref{WittSplit} it suffices to give an isomorphism
\[
\TC_{2r-1}(k[x]/(x^e),(x)) \simeq  \prod W_h(k) 
\]
where the product is indexed over $1 \leq m' \leq re$ with $(p,m')=1$ and with $h = h(p,r,e,m')$ given by
\[
h =  \left\{
	\begin{array}{ll}
		  s & \mbox{if } e' \nmid m' \\
		  \min\{u,s\} & \mbox{if } e' \mid m'
	\end{array}
\right.
\]
where $s = s(p,re,m')$ is such that $p^{s-1}m' \leq re < p^sm'$, and where $e = p^ue'$ with $(e',p)=1$.
Now $\TC(A,I)$ is given as the equalizer of $\TC^-(A,I) \overset{\phi - \can}\longto \TP(A,I)$. This  map splits as
\[
\prod_{\substack{ m' \geq 1 \\ (p,m') = 1 } } \prod_{v \geq 0} \TC^-(p^vm') \overset{\phi - \can}\longto \prod_{\substack{ m' \geq 1 \\ (p,m') = 1 } } \prod_{v \geq 0} \TP(p^vm')
\]
By \cref{TPordinary} and \cref{TPspecial} both $\TC^-(p^vm')$ and $\TP(p^vm')$ are concentrated in odd degrees, and $\phi-\can$ is surjective on homotopy, so the long exact sequence calculating $\TC$ splits into short exact sequences
\[
0 \to \TC_*(m') \to \prod_{v \geq 0} \TC^-_*(p^vm') \overset{\phi - \can}\longto  \prod_{v \geq 0} \TP_*(p^vm') \to 0
\]
Now if $e' \nmid m'$ then from \cref{TPordinary} we have a map of short exact sequences
\[
\begin{tikzcd}[row sep=2em, column sep=1.8em]
0 \arrow[r] & \prod_{v \geq s} W_{v}(k) \arrow[d, "\phi-\can"] \arrow[r] & \prod_{v \geq 0} \TC^{-}_{2r+1}(p^vm') \arrow[d, "\phi - \can"] \arrow[r] & \prod_{0 \leq v < s} W_{v+1}(k) \arrow[d, "\overline{\phi - \can}"]  \arrow[r] & 0\\
0 \arrow[r] & \prod_{v \geq s} W_{v}(k) \arrow[r] & \prod_{v \geq 0} \TP_{2r+1}(p^vm') \arrow[r] & \prod_{0 \leq v < s}  W_{v}(k) \arrow[r] & 0
\end{tikzcd}
\]
where $s = s(p,r,d(m'))$. The left hand vertical map is an isomorphism (since in this range $\can$ is an isomorphism and $\phi$ is divisible by powers of $p$) and the right hand vertical map is an epimorphism with kernel $W_{s}(k)$. Thus $\TC_{2r+1}(m') = W_s(k)$. Note that in this case $h = s$.

If $e' \mid m'$ then we must distinguish between two cases. First, if $s < u$ then again we get a map of short exact sequences
\[
\begin{tikzcd}[row sep=2em, column sep=1.4em]
 \prod_{s \leq v < u} W_{v}(k) \times \prod_{u \leq v} W_u(k) \arrow[d, "\phi-\can"] \arrow[r] & \prod_{v \geq 0} \TC^{-}_{2r+1}(p^vm') \arrow[d, "\phi - \can"] \arrow[r] & \prod_{0 \leq v < s} W_{v+1}(k) \arrow[d, "\overline{\phi - \can}"]  \\
 \prod_{s \leq v < u} W_{v}(k) \times  \prod_{u \leq v} W_u(k) \arrow[r] &\prod_{v \geq 0} \TP_{2r+1}(p^vm') \arrow[r] & \prod_{0 \leq v < s}  W_{v}(k) 
\end{tikzcd}
\]
so in this case $\TC_{2r+1}(m') = W_s(k)$ Since $u > s$ we have $h = s$ as claimed.
If instead, $u \leq s$ then the map of short exact sequences looks as follows
\[
\begin{tikzcd}[row sep=2em, column sep=1.5em]
 \prod_{v \geq s} W_{v}(k) \arrow[d, "\phi-\can"] \arrow[r] & \prod_{v \geq 0} \TC^{-}_{2r+1}(p^vm') \arrow[d, "\phi - \can"] \arrow[r] & \prod_{0 \leq v < u} W_{v+1}(k) \times \prod_{u \leq v < s} W_u(k) \arrow[d, "\overline{\phi - \can}"] \\
 \prod_{v \geq s} W_{v}(k) \arrow[r] & \prod_{v \geq 0} \TP_{2r+1}(p^vm') \arrow[r] & \prod_{0 \leq v < u}  W_{v}(k) \times \prod_{u \leq v < s} W_u(k)
\end{tikzcd}
\]
so in this case $\TC_{2r+1}(m') = W_u(k)$. Since $u \leq s$ we see that $u = h$ in this case. This completes the proof.
\end{proof}

\bibliographystyle{siam}
\bibliography{bibfileNewB}


\end{document}